\numberwithin{equation}{section}
\theoremstyle{plain}
\newtheorem{thm}{Theorem}[section]
\newtheorem{cor}[thm]{Corollary}
\theoremstyle{definition}
\def\Rn{{\mathbb R}^n}
\def\X{\mathbb X}
\begin{document}

\title[Hardy inequalities on metric measure spaces]
{Hardy inequalities on metric measure spaces}

\dedicatory{Dedicated to G. H. Hardy on the occasion of 100 years of his famous inequality}

\author[Michael Ruzhansky]{Michael Ruzhansky}
\address{
  Michael Ruzhansky:
  \endgraf
  Department of Mathematics
  \endgraf
  Imperial College London
  \endgraf
  180 Queen's Gate, London, SW7 2AZ
  \endgraf
  United Kingdom
  \endgraf
and
  \endgraf
  Department of Mathematics: Analysis, Logic and Discrete Mathematics
  \endgraf
  Ghent University, Belgium
  \endgraf
  and 
  \endgraf
  School of Mathematical Sciences
    \endgraf
    Queen Mary University of London
  \endgraf
  United Kingdom
  \endgraf
  {\it E-mail address} {\rm ruzhansky@gmail.com}
  }
  
\author[Daulti Verma]{Daulti Verma}
\address{
  Daulti Verma:
  \endgraf
    Miranda House College
  \endgraf
  University of Delhi
  \endgraf
  Delhi 110007
  \endgraf
  India
  \endgraf
   and
  \endgraf
    Department of Mathematics
  \endgraf
  Imperial College London
  \endgraf
  180 Queen's Gate, London, SW7 2AZ
  \endgraf
  United Kingdom
  \endgraf
  {\it E-mail address} {\rm daulti.verma@mirandahouse.ac.in}
 }


\date{\today}

\subjclass{26D10, 22E30.} \keywords{Hardy inequalities, metric measure spaces, homogeneous group, hyperbolic space, Riemannian manifolds with negative curvature}

\begin{abstract}
In this note we give several characterisations of weights for two-weight Hardy inequalities to hold on general metric measure spaces possessing polar decompositions. Since there may be no differentiable structure on such spaces, the inequalities are given in the integral form in the spirit of Hardy's original inequality. We give examples obtaining new weighted Hardy inequalities on $\Rn$, on homogeneous groups, on hyperbolic spaces, and on Cartan-Hadamard manifolds. We note that doubling conditions are not required for our analysis.
\end{abstract}

\maketitle

\tableofcontents
   
   \section{Introduction}
   
In \cite{Hardy:1920}, Hardy showed his famous inequality 
\begin{equation}\label{EQ:Hardy0}
\int_b^\infty \left(\frac{\int_b^x f(t) dt}{x}\right)^p dx \leq \left(\frac{p}{p-1}\right)^p \int_b^\infty f(x)^p dx,
\end{equation} 
where $p>1$, $b>0$, and $f\geq 0$ is a nonnegative function. The original discrete version of this inequality goes back to \cite{Hardy1919} making this year the 100${}^{th}$ anniversary of this topic, see \cite{RS-book} for a historical discussion. Consequently, we do not try to provide a comprehensive historical account here but refer to \cite{RS-book} for an extensive historical overview of the subject.

In particular, since then a lot of work has been done on Hardy inequalities in different forms and in different settings. It is clearly impossible to give a complete overview of the literature, so let us only refer to
books and surveys by Opic and Kufner \cite{OK-book}, Davies \cite{Davies}, 
Kufner, Persson and Samko \cite{Hardy-weighted-book,KP-Samko},
Edmunds and Evans \cite{DE-book}, Mazya \cite{Mazya-Sobolev-spaces,Mazya-book}, Ghoussoub and Moradifam \cite{GM-book}, Balinsky, Evans and Lewis \cite{BEL-book}, and references therein. Hardy inequalities in 1D with weights have been also studied in \cite{AM,M} in a similar spirit to our approach. Another list of conditions was given in \cite{DV2}, however, both in more limited settings, as well as, more recently, in \cite{GKP10}. More precisely, 
in our Theorem \ref{THM:Hardy1} in the special case $\mathbb X=\mathbb R$, the conditions $D_i$ correspond to conditions $A_i(s)$, $s=1,2,3,4$, in \cite[Theorem 2]{GKP10}. 

In this paper we show that the inequality \eqref{EQ:Hardy0} actually holds in a much more general setting, also with rather general pairs of weights. However, the weights have to satisfy certain compatibility conditions for such inequalities to hold true, and these conditions are {\em necessary and sufficient}.
   
We note that the only known cases of our results are essentially only the Euclidean ones. The examples we give on homogeneous groups and hyperbolic spaces are new, but the main result itself is of course more general, completely characterising the weights for the integral Hardy inequality on metric measure spaces. 
The importance of such results is, in particular, in that they lead to {\em a variety of hypoelliptic Hardy-Sobolev and other inequalities, once we apply it with the weights associated to Riesz kernels (for hypoelliptic operators}, see \cite{RY-hypoelliptic}).

  More specifically, we consider metric spaces $(\mathbb X,d)$ with a Borel measure $dx$ allowing for the following {\em polar decomposition} at $a\in{\mathbb X}$: we assume that there is a locally integrable function $\lambda \in L^1_{loc}$  such that for all $f\in L^1(\mathbb X)$ we have
   \begin{equation}\label{EQ:polar}
   \int_{\mathbb X}f(x)dx= \int_0^{\infty}\int_{\Sigma_r} f(r,\omega) \lambda(r,\omega) d\omega_r dr,
   \end{equation}
    for the sets $\Sigma_r=\{x\in\mathbb X: d(x,a)=r\}$ 
    with a measure on it denoted by $d\omega=d\omega_r$. 
    The condition \eqref{EQ:polar} is rather general since we allow the function $\lambda$ to depend on the whole variable $x=(r,\omega)$. 

The reason to assume \eqref{EQ:polar} is that since $\X$ does not have to have a differentiable structure, the function $\lambda(r,\omega)$ can not be in general obtained as the Jacobian of the polar change of coordinates. However, if such a differentiable structure exists on $\X$, the condition \eqref{EQ:polar} can be obtained as the standard polar decomposition formula. 
In particular, let us give several examples of $\mathbb X$ for which the condition \eqref{EQ:polar} is satisfied with different expressions for $ \lambda (r,\omega)$:

\begin{itemize}
\item[(I)] Euclidean space $\Rn$: $ \lambda (r,\omega)= {r}^{n-1}.$
\item[(II)] Homogeneous groups: $ \lambda (r,\omega)= {r}^{Q-1}$, where $Q$ is the homogeneous dimension of the group. Such groups have been consistently developed by Folland and Stein \cite{FS-Hardy}, see also an up-to-date exposition in \cite{FR}.
\item[(III)] Hyperbolic spaces $\mathbb H^n$:  $\lambda(r,\omega)=(\sinh {r})^{n-1}$.
\item[(IV)] Cartan-Hadamard manifolds: Let $K_M$ be the sectional curvature on $(M, g).$ A Riemannian manifold $(M, g)$ is called {\em a Cartan-Hadamard manifold} if it is complete, simply connected and has non-positive sectional curvature, i.e., the sectional curvature $K_M\le 0$ along each plane section at each point of M. Let us fix a point $a\in M$ and denote by 
$\rho(x)=d(x,a)$ the geodesic distance from $x$ to $a$ on $M$. The exponential map ${\rm exp}_a :T_a M \to  M$ is a diffeomorphism, see e.g. Helgason \cite{DV3}.  Let $J(\rho,\omega)$ be the density function on $M$, see e.g. \cite{DV1}. Then we have the following polar decomposition: 
$$
\int_M f(x) dx=\int_0^{\infty}\int_{\mathbb S^{n-1}}f({\rm exp}_{a}(\rho \omega))J(\rho,\omega) \rho^{n-1}d\rho d\omega,
$$
so that we have \eqref{EQ:polar} with $\lambda(\rho,\omega)= J(\rho,\omega) \rho^{n-1}.$ 
 \item[(V)] Complete manifolds: Let $M$ be a complete manifold. Let $p\in M$ and let $C(p)$ denote the cut locus of $p$. Let $D_{p}:=M\backslash C(p)$ and $S(p;r):=\{x\in M_{p}:|x|=r\}$, where $|\cdot|$ is the Riemannian length, where  $M_{p}$ stands for the tangent space  at $p$. Then for any $p\in M$ and any integrable function $f$ on $M$ we have (e.g. see \cite[Formula III.3.5, P.123]{Cha06}) the polar decompsition
        \begin{equation}\label{pol_decom_manif}
        \int_{M}f dV=\int_{0}^{+\infty}dr\int_{r^{-1}S(p;r)\cap D_{p}}f(\exp r\xi)\sqrt{g}(r;\xi)d\mu_{p}(\xi)
        \end{equation}
        for some function $\sqrt{g}$ on $D_{p}$, where $r^{-1}S(p,r)\cap D_{p}$ is the subset of $S_{p}$ obtained by dividing each of the elements of $S(p,r)\cap D_{p}$ by $r$, and $S_{p}:=S(p;1)$. Here $d\mu_{p}(\xi)$ is the Riemannian measure on $S_{p}$ induced by the Euclidean Lebesgue measure on $M_{p}$. We refer to \cite{Cha06}, \cite[Chapter 4]{Li12} and \cite[Chapter 1, Paragraph 12]{CLN06} for more details on this decompsition.
\end{itemize}

 Throughout this paper, by $ A\approx B$ we will always mean that the expressions $A$ and $B$ are equivalent.
 

\section{Main results}

We denote by $B(a,r)$ the ball in $\mathbb X$ with centre $a$ and radius  $r$, i.e 
$$B(a,r):= \{x\in\mathbb X : d(x,a)<r\},$$ 
where $d$ is the metric on $\X$. 
Once and for all we will fix some point $a\in {\mathbb X}$, and we will write $${\vert x \vert}_a := d(a,x).$$

Our first main result is the following characterisation of weights $u$ and $v$ for the corresponding Hardy inequality to hold on $\X$, with the characterisation for the conjugate Hardy inequality given in Theorem \ref{THM:Hardy2}. The first condition is the Muckenhoupt condition while the other ones are equivalent to it.

\begin{thm}\label{THM:Hardy1}
Let $1<p\le q <\infty$ and let $s>0$. Let $\mathbb X $ be a metric measure space with a polar decomposition \eqref{EQ:polar} at a. 
Let $u,v> 0$ be measurable functions  positive a.e in $\mathbb X$  such that $u\in L^1(\mathbb X\backslash \{a\})$ and $v^{1-p'}\in L^1_{loc}(\mathbb X)$. Denote
\begin{align}
U(x):= { \int_{\mathbb X\backslash{B(a,|x|_a )}} u(y) dy}  \nonumber
\end{align} 
and 
\begin{align} 
V(x):= \int_{B(a,|x|_a  )}v^{1-p'}(y)dy\nonumber. 
\end{align}
Then the inequality
\begin{equation}\label{EQ:Hardy1}
\bigg(\int_\mathbb X\bigg(\int_{B(a,\vert x \vert_a)}\vert f(y) \vert dy\bigg)^q u(x)dx\bigg)^\frac{1}{q}\le C\bigg\{\int_{\mathbb X} {\vert f(x) \vert}^pv(x)dx\bigg\}^{\frac1p}
\end{equation}
holds for all measurable functions $f:\X\to{\mathbb C}$ if and only if  any of the following equivalent conditions hold:

\begin{enumerate}
\item $\mathcal D_{1} :=\sup_{x\not=a} \bigg\{U^\frac{1}{q}(x) V^\frac{1}{p'}(x)\bigg\}<\infty.$
\end{enumerate}

\begin{enumerate}\setcounter{enumi}{1}
\item $\mathcal D_{2}:=\sup_{x\not=a} \bigg\{\int_{\mathbb X\backslash{B(a,|x|_a )}}u(y)V^{q(\frac{1}{p'}-s)}(y)dy\bigg\}^\frac{1}{q}V^s(x)<\infty.$
\item $\mathcal D_{3}:=\sup_{x\not=a}\bigg\{\int_{B(a,|x|_a)}u(y)V^{q(\frac{1}{p'}+s)}(y)dy\bigg\}^{\frac{1}{q}}V^{-s}(x)<\infty $, provided that $u,v^{1-p'}\in L^1(\X)$.
\end{enumerate}

\begin{enumerate}\setcounter{enumi}{3}
\item $\mathcal D_{4}:=\sup_{x\not=a}\bigg\{\int_{B(a,\vert x \vert_a)}v^{1-p'}(y) U^{p'(\frac{1}{q}-s)}(y)dy\bigg\}^\frac{1}{p'}U^s(x)<\infty.$ 

\item $\mathcal D_{5}:=\sup_{x\not=a}\bigg\{\int_{\mathbb X\backslash{B(a,\vert x \vert_a )}}v^{1-p'}(y)U^{p'(\frac{1}{q}+s)}(y)dy\bigg\}^\frac{1}{p'}U^{-s}(x)<\infty$, provided that $u,v^{1-p'}\in L^1(\X)$.
\end{enumerate}

Moreover, the constant $C$ for which \eqref{EQ:Hardy1} holds and quantities $\mathcal D_{1}-\mathcal D_{5}$ are related by 
\begin{equation}\label{EQ:constants}
\mathcal D_{1} \leq C\leq \mathcal D_1(p')^{\frac{1}{p'}} p^\frac{1}{q},
\end{equation}   
and 
$$\mathcal D_1 \le \left(\max(1,{p'}{s})\right)^\frac{1}{q}\mathcal D_2, \;\mathcal D_2 \le (\max(1,\frac{1}{p's}))^\frac{1}{q} \mathcal D_1,$$ 
$$(\frac{sp'}{1+p's})^\frac{1}{q} \mathcal D_3 \le \mathcal D_1\le(1+sp')^\frac{1}{q}\mathcal D_3,$$
$$\mathcal D_1 \le (\max(1,qs))^\frac{1}{p'} \mathcal D_4,\; \mathcal D_4 \le (\max(1,\frac{1}{qs}))^\frac{1}{p'}
\mathcal D_1,$$ 
$$(\frac{sq}{1+qs})^\frac{1}{p'}\mathcal D_5 \le \mathcal D_1 \le (1+sq)^\frac{1}{p'} \mathcal D_5.$$
\end{thm}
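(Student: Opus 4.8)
The plan is to exploit the polar decomposition \eqref{EQ:polar} to reduce \eqref{EQ:Hardy1} to a one-dimensional two-weight Hardy inequality, to prove the core equivalence (the inequality $\iff\mathcal D_1$) together with the constant bounds \eqref{EQ:constants}, and finally to deduce the equivalences $\mathcal D_1\iff\mathcal D_j$, $j=2,\dots,5$, by elementary monotonicity and change-of-variable arguments. The starting observation is that $U(x)$ and $V(x)$ depend on $x$ only through $|x|_a$, that $r\mapsto U(r)$ is nonincreasing with $U(r)\to0$ as $r\to\infty$ (since $u\in L^1(\X\setminus\{a\})$), that $r\mapsto V(r)$ is nondecreasing, and that the inner integral $\int_{B(a,|x|_a)}|f|$ likewise depends on $x$ only through $|x|_a$.

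For sufficiency I would first write, using \eqref{EQ:polar}, $\int_\X(\int_{B(a,|x|_a)}|f|)^q u\,dx=\int_0^\infty F(r)^q\,\widetilde u(r)\,dr$, where $F(r)=\int_{B(a,r)}|f|$ and $\widetilde u(r)=\int_{\Sigma_r}u\,\lambda\,d\omega_r$, so that $\int_r^\infty\widetilde u=U(r)$. Applying Hölder's inequality on each sphere $\Sigma_t$ with exponents $p,p'$ via the factorisation $|f|\lambda=(|f|v^{1/p}\lambda^{1/p})(v^{-1/p}\lambda^{1/p'})$ gives $\int_{\Sigma_t}|f|\lambda\,d\omega_t\le\phi(t)\,\widetilde V(t)^{1/p'}$, where $\phi(t)^p=\int_{\Sigma_t}|f|^p v\lambda\,d\omega_t$ and $\widetilde V(t)=\int_{\Sigma_t}v^{1-p'}\lambda\,d\omega_t$ (using $v^{-p'/p}=v^{1-p'}$), so that $\int_0^\infty\phi^p=\int_\X|f|^p v$ is the $p$-th power of the right-hand side of \eqref{EQ:Hardy1}, and $\int_0^t\widetilde V=V(t)$. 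Hence $F(r)\le\int_0^r\phi(t)\widetilde V(t)^{1/p'}dt$, and the problem reduces to the one-dimensional inequality $\int_0^\infty(\int_0^r\phi\,\widetilde V^{1/p'})^q\widetilde u\,dr\le C^q(\int_0^\infty\phi^p)^{q/p}$. A direct computation identifies the associated Muckenhoupt functional as $\sup_{r>0}U(r)^{1/q}V(r)^{1/p'}=\mathcal D_1$, and the classical weighted one-dimensional Hardy inequality—proved, for the stated constant, by inserting a power $V^{\pm\alpha}$ before Hölder and telescoping $\int_0^r V'V^{\alpha p'}=V(r)^{\alpha p'+1}/(\alpha p'+1)$, with an additional Minkowski-type step to pass from $p$ to $q$—yields $C\le(p')^{1/p'}p^{1/q}\mathcal D_1$.

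For necessity I would test \eqref{EQ:Hardy1} on $f=v^{1-p'}\mathbf{1}_{B(a,r_0)}$ for fixed $r_0=|x_0|_a>0$. Since $v^{(1-p')p}v=v^{1-p'}$, the right-hand side equals $V(r_0)^{1/p}$, while for $|x|_a\ge r_0$ the inner integral equals $V(r_0)$, so the left-hand side is at least $U(r_0)^{1/q}V(r_0)$. Combining, $U(r_0)^{1/q}V(r_0)^{1/p'}\le C$, and taking the supremum over $r_0$ gives $\mathcal D_1\le C$, the left inequality in \eqref{EQ:constants}; finiteness and positivity of $V(r_0)$ are guaranteed by $v^{1-p'}\in L^1_{loc}$ and $v>0$ a.e.

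Finally, for $\mathcal D_1\iff\mathcal D_j$ I would argue on the radial profiles. In each case one direction is immediate from monotonicity: for $\mathcal D_2$, since $V(y)\ge V(x)$ on $\X\setminus B(a,|x|_a)$ one pulls $V^{q(1/p'-s)}(x)$ out of the integral to obtain $\mathcal D_2\ge\mathcal D_1$ up to the stated factor. The reverse direction uses the pointwise Muckenhoupt bound $V(y)\le\mathcal D_1^{p'}U(y)^{-p'/q}$, after which $\int_{\X\setminus B(a,|x|_a)}u\,V^{q(1/p'-s)}$ reduces, via $u\,dy\leftrightarrow-dU$ along the radius, to $\int_0^{U(x)}\sigma^{p's-1}\,d\sigma=U(x)^{p's}/(p's)$; inserting this and using $U^{1/q}V^{1/p'}\le\mathcal D_1$ once more yields $\mathcal D_2\le(\ldots)\mathcal D_1$. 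The conditions $\mathcal D_3$ (integration over the ball, using $dV$ along the radius) and $\mathcal D_4,\mathcal D_5$ (the same scheme with the roles of $U,V$ and of $q,p'$ interchanged) are handled identically, the hypothesis $u,v^{1-p'}\in L^1(\X)$ in $\mathcal D_3,\mathcal D_5$ being exactly what makes the corresponding boundary terms vanish. The main obstacle throughout is bookkeeping: securing the sharp constants in \eqref{EQ:constants}, respecting the sign of the exponents $1/p'\mp s$ (which dictates the direction of each monotonicity estimate), and, in the sufficiency step, carrying out the $p\le q$ passage with the correct constant rather than the easier $p=q$ case.
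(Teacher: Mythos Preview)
Your treatment of the core implication (the Hardy inequality $\iff\mathcal D_1$ together with the bounds $\mathcal D_1\le C\le(p')^{1/p'}p^{1/q}\mathcal D_1$) is correct and coincides with the paper's argument: the paper carries out inline exactly the weighted H\"older (with the auxiliary power $h=V^{1/(pp')}$), Minkowski for $q/p\ge1$, and telescoping that you package as a reduction to the one-dimensional theorem, and the necessity test $f=v^{1-p'}\mathbf 1_{B(a,r_0)}$ is identical.

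The gap is in the equivalences $\mathcal D_1\approx\mathcal D_2$ and, by symmetry, $\mathcal D_1\approx\mathcal D_4$. Your two devices---monotonicity of $V$, and the pointwise Muckenhoupt bound $V\le\mathcal D_1^{p'}U^{-p'/q}$ followed by the substitution $u\,dy\leftrightarrow -dU$---both push in the \emph{same} direction. When $s>1/p'$ the exponent $q(1/p'-s)$ is negative, so monotonicity already gives $\mathcal D_2\le\mathcal D_1$; and raising $V\le\mathcal D_1^{p'}U^{-p'/q}$ to that negative power reverses the inequality, so your substitution yields only a lower bound on $\int u\,V^{q(1/p'-s)}$, hence again an estimate of the form $\mathcal D_2\le(\ldots)\mathcal D_1$. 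Neither tool produces $\mathcal D_1\le(p's)^{1/q}\mathcal D_2$ in this regime; this is not bookkeeping but a missing step. The paper closes it by a self-improving bootstrap: with $W(x)=\int_{\X\setminus B(a,|x|_a)}u\,V^{q(1/p'-s)}$ one inserts $V^{\pm(s-1/p')}W^{\pm(s-1/p')/s}$ under the integral defining $U(x)$, pulls out $\bigl(\sup V^{s}W^{1/q}\bigr)$ to a fractional power, and telescopes the remainder $\int\widetilde W\,\widetilde W_1^{(1/p'-s)/s}$ to recover a second factor of the same supremum, giving $\mathcal D_1\le(p's)^{1/q}\mathcal D_2$. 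An alternative compatible with your scheme is to integrate by parts in the radial variable using the $\mathcal D_2$-pointwise bound $W(r)\le\mathcal D_2^qV(r)^{-qs}$ (rather than the $\mathcal D_1$-bound, which is circular here), which also produces the constant $p's$. The analogous issue arises for $\mathcal D_4$ when $s>1/q$.
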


In particular, Theorem \ref {THM:Hardy1} is an extension of \eqref{EQ:Hardy0} to the setting of metric measures spaces $\X$ with the polar decomposition \eqref{EQ:polar}: in particular, for $p=q$ and real-valued nonnegative measurable $f\geq 0$, inequality \eqref{EQ:Hardy1} becomes 
$$
\int_\mathbb X\bigg(\int_{B(a,\vert x \vert_a)} f(y) dy\bigg)^p u(x)dx \le C \int_{\mathbb X} {f(x)}^p v(x)dx,
$$
as an extension of \eqref{EQ:Hardy0}. Indeed, in this case we can take $u(x)=\frac{1}{x^p}$, $v(x)=1$, $\mathbb X=[b,\infty)$, $a=b$, so that Theorem \ref {THM:Hardy1} implies  \eqref{EQ:Hardy0}.

For the results in the case of $\mathbb X=\mathbb R$ we can refer to \cite{Hardy-weighted-book, PS01}, and also to \cite{PSW07} for inequalities for $q<p$.
For $\mathbb X=\Rn$, the result has been proved in \cite {Daulti-Verma:Razmadze}, with related inequalities obtained in one dimension in  \cite{DV2,OK-book}. For related works on hyperbolic spaces we can refer to \cite{DV4,RY18_manif}, and to \cite{DV5,RY18_manif} for inequalities on Cartan-Hadamard manifolds, with the background analysis available in   \cite{DV1,DV3}.
For the analysis of Hardy inequalities on homogeneous groups we can refer to
\cite{RS-identities,RSY18_Tran}.

Let us also briefly discuss the conjugate Hardy inequality to that in Theorem \ref{THM:Hardy1}:

\begin{thm}\label{THM:Hardy2}
Let $1<p\le q <\infty$ and $s>0$. Let $\mathbb X $ be a metric measure space with a polar decomposition as in \eqref{EQ:polar}. Let $u,v >0$ be measurable functions positive a.e. such that
$u\in L^1_{loc}(\mathbb X)$ and  $v^{1-p'}\in L^1(\mathbb X\backslash \{a\})$. Let
\begin{align}
U(x)= {\int_{{B(a,\vert x \vert_a )}} u(y) dy}  \nonumber
\end{align} 
and \begin{align} 
V(x)= \int_{\mathbb X\backslash B(a,\vert x \vert_a )}v^{1-p'}(y)dy\nonumber. 
\end{align}
Then the inequality
\begin{align}\label{EQ:Hardycon}
\bigg(\int_\mathbb X\bigg(\int_{\mathbb X\backslash B(a,\vert x \vert_a)}\vert f(y) \vert dy\bigg)^q u(x)dx\bigg)^\frac{1}{q}\le C\bigg\{\int_{\mathbb X} {\vert f(x) \vert}^p v(x)dx\bigg\}^\frac{1}{p}
\end{align}
holds for all measurable functions f if and only if any of the following equivalent conditions holds:
\begin{enumerate}
\item $\mathcal D_{1}^{*} :=\sup_{x\not=a} \bigg\{U^\frac{1}{q}(x) V^\frac{1}{p'}(x)\bigg\}<\infty$.
\item $\mathcal D_{2}^{*}:=\sup_{x\not=a} \bigg\{\int_{{B(a,\vert x \vert_a )}}u(y)V^{q(\frac{1}{p'}-s)}(y)dy\bigg\}^\frac{1}{q}V^s(x)<\infty.$ 

\item $\mathcal D_{3}^{*}:=\sup_{x\not=a}\bigg\{\int_{\mathbb X\backslash B(a,\vert x \vert_a)}u(y)V^{q(\frac{1}{p'}+s)}(y)dy\bigg\}^{\frac{1}{q}}V^{-s}(x)<\infty,$
provided that $u,v^{1-p'}\in L^1(\mathbb X).$
\item $\mathcal D_{4}^{*}:=\sup_{x\not=a}\bigg\{\int_{\mathbb X\backslash B(a,\vert x \vert_a)}v^{1-p'}(y) U^{p'(\frac{1}{q}-s)}(y)dy\bigg\}^\frac{1}{p'}U^s(x)<\infty.$
\item $\mathcal D_{5}^{*}:=\sup_{x\not=a}\bigg\{\int_{{B(a,\vert x \vert_a )}}v^{1-p'}(t)U^{p'(\frac{1}{q}+s)}(t)dt\bigg\}^\frac{1}{p'}U^{-s}(x)<\infty,$
provided that $u,v^{1-p'}\in L^1(\mathbb X).$
\end{enumerate}
\end{thm}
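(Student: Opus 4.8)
The plan is to deduce Theorem \ref{THM:Hardy2} from Theorem \ref{THM:Hardy1} by a radial inversion that interchanges the interior region $B(a,|x|_a)$ with its exterior $\mathbb X\setminus B(a,|x|_a)$. Since every integral occurring in both statements is expressed through the polar decomposition \eqref{EQ:polar}, I would work entirely at the level of the $(r,\omega)$-representation and substitute $r\mapsto 1/r$ in the radial variable, leaving the angular variable $\omega$ untouched. Concretely, I would introduce the transformed density $\tilde\lambda(r,\omega):=r^{-2}\lambda(1/r,\omega)$, which again is locally integrable and furnishes a polar decomposition of the same type as \eqref{EQ:polar}, together with the transformed data $\tilde f(r,\omega):=f(1/r,\omega)$, $\tilde u(r,\omega):=u(1/r,\omega)$ and $\tilde v(r,\omega):=v(1/r,\omega)$.

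The first step is the bookkeeping identity. A direct change of variables shows that under $r\mapsto 1/r$ the choice of $\tilde\lambda$ forces $\tilde u(y)\,dy$ and $\tilde v^{1-p'}(y)\,dy$ to coincide with $u(y)\,dy$ and $v^{1-p'}(y)\,dy$ after the interchange of inner and outer radial regions, so that the exterior operator $\int_{\mathbb X\setminus B(a,|x|_a)}|f|$ becomes the interior operator $\int_{B(a,|\cdot|_a)}|\tilde f|$ for the tilded configuration. Carrying the same substitution through the outer integrals then turns the two sides of \eqref{EQ:Hardycon} for $(f,u,v,\lambda)$ into exactly the two sides of \eqref{EQ:Hardy1} for $(\tilde f,\tilde u,\tilde v,\tilde\lambda)$; in particular the functions $U,V$ of Theorem \ref{THM:Hardy2} coincide, after the substitution, with the functions $U,V$ of Theorem \ref{THM:Hardy1} built from the tilded data, and consequently each quantity $\mathcal D_i^{*}$ equals the corresponding $\mathcal D_i$ for the tilded configuration. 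Once this is in place, Theorem \ref{THM:Hardy1} applied to $(\tilde u,\tilde v,\tilde\lambda)$ yields both the characterisation \eqref{EQ:Hardycon}$\iff\mathcal D_1^{*}<\infty$ and the mutual equivalence of $\mathcal D_1^{*},\dots,\mathcal D_5^{*}$, the latter being inherited for free from the equivalence of $\mathcal D_1,\dots,\mathcal D_5$.

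The main obstacle I anticipate is matching the standing integrability hypotheses. The substitution sends $u\in L^1_{loc}(\mathbb X)$ to the requirement that $\tilde u$ be integrable over exteriors of balls, whereas Theorem \ref{THM:Hardy1} is stated under the global condition $\tilde u\in L^1(\mathbb X\setminus\{a\})$; since $\int_{\mathbb X\setminus\{a\}}\tilde u=\int_{\mathbb X\setminus\{a\}}u$, this corresponds to $u\in L^1(\mathbb X)$, which is genuinely stronger than the $L^1_{loc}$ assumption of Theorem \ref{THM:Hardy2} (e.g. $u\equiv 1$ on $\mathbb R^n$). I would bridge this gap by truncation: replace $u$ by $u_n:=u\,\mathbf 1_{B(a,n)}$, which lies in $L^1(\mathbb X\setminus\{a\})$ precisely because $u\in L^1_{loc}$, apply the inversion and Theorem \ref{THM:Hardy1} to $u_n$ with the constant controlled uniformly by $\mathcal D_1^{*}(u_n)\le\mathcal D_1^{*}(u)$, and then let $n\to\infty$, using monotone convergence on the left-hand side of \eqref{EQ:Hardycon} for the sufficiency direction and the monotone limit $\mathcal D_1^{*}(u_n)\uparrow\mathcal D_1^{*}(u)$ for the necessity direction. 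For the conditions $\mathcal D_3^{*}$ and $\mathcal D_5^{*}$ no truncation is needed, since their extra hypothesis $u,v^{1-p'}\in L^1(\mathbb X)$ transforms precisely into the hypotheses required by $\mathcal D_3$ and $\mathcal D_5$ in Theorem \ref{THM:Hardy1}.

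Finally, I would record the small measure-theoretic checks that legitimise the reduction: that $\tilde\lambda$ is locally integrable and defines a polar decomposition to which Theorem \ref{THM:Hardy1} may be applied (here it is convenient to note that the proof of Theorem \ref{THM:Hardy1} uses only the polar representation \eqref{EQ:polar} and no further metric structure, so that the radial coordinate $1/r$ may legitimately play the role of the new distance to $a$), and that $v^{1-p'}\in L^1(\mathbb X\setminus\{a\})$ indeed transforms into the local integrability of $\tilde v^{1-p'}$ demanded by the tilded problem. These verifications are routine once the change-of-variables identity of the first step is established, so the entire weight of the argument rests on that identity together with the truncation limit.
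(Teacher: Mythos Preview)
Your reduction by the radial inversion $r\mapsto 1/r$ is a different route from the paper's. The paper does not deduce Theorem~\ref{THM:Hardy2} from Theorem~\ref{THM:Hardy1}; instead it obtains the equivalence $\mathcal D_1^{*}\approx\cdots\approx\mathcal D_5^{*}$ directly from Theorem~\ref{THM:equivalence} by taking $f=v^{1-p'}$, $g=u$, $\alpha=1/p'$, $\beta=1/q$ (so the roles of the ``inner'' and ``outer'' functions are swapped rather than the radial variable inverted), and then says one \emph{repeats} the argument of Section~\ref{SEC:proofH} with the obvious modifications. The inversion you use is mentioned in the paper only as an alternative, and only in the special case of homogeneous groups, where $x\mapsto x^{-1}$ is an honest involutive map of the space.

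Your approach is workable, but note a genuine caveat you partly flag yourself: after $r\mapsto 1/r$ the tilded data $(\tilde\lambda,\tilde u,\tilde v)$ do not, in general, come from a metric measure space with a polar decomposition in the sense of \eqref{EQ:polar}. For instance, already on $\mathbb R^n$ one gets $\tilde\lambda(r)=r^{-n-1}$, which is not $L^1_{loc}$ near $r=0$, and in the abstract setting there is no reason the spheres $\Sigma_{1/r}$ should be the distance spheres of any metric. So you cannot invoke Theorem~\ref{THM:Hardy1} as a black box; what you are really doing is transporting its \emph{proof} (which, as you observe, only uses the $(r,\omega)$-representation) through the substitution. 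That is legitimate, but it collapses to essentially the same thing the paper does: rerun the computation with inside and outside exchanged. Your truncation $u_n=u\,\mathbf 1_{B(a,n)}$ to reconcile $u\in L^1_{loc}$ with the global integrability needed on the inverted side is a clean device, and the monotone-convergence passage you describe is correct. In short: your argument is sound once you accept that you are applying the \emph{argument} of Theorem~\ref{THM:Hardy1} rather than its statement; the paper avoids this subtlety by swapping the roles of $f,g$ and $\alpha,\beta$ in Theorem~\ref{THM:equivalence} and redoing Section~\ref{SEC:proofH} directly.
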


\section{Applications and examples}

In this section we will give examples of the application of Theorem \ref {THM:Hardy1} in the settings of homogeneous groups, hyperbolic spaces, and Cartan-Hadamard manifolds.

\subsection{Homogeneous groups}

Let $\mathbb G$ be a homogeneous group of homogeneous dimension $Q$, equipped with a quasi-norm $|\cdot|$.
For the general description of the setup of homogeneous groups we refer to \cite{FS-Hardy} or \cite{FR}. Particular example of homogeneous groups are the Euclidean space $\Rn$ (in which case $Q=n$), the Heisenberg group, as well as general stratified groups (homogeneous Carnot groups) and graded groups.

In relation to the notation of this paper, let us take $a=0$ to be the identity of the group $\mathbb G$. We can also simplify the notation denoting $\vert x \vert_a$ by $\vert x \vert$, which is consistent with the notation for the quasi-norm $|\cdot|.$

If we take power weights 
$$u(x)= {\vert x \vert}^\alpha \textrm{ and } v(x)= {\vert x \vert }^\beta,$$
then the inequality \eqref{EQ:Hardy1} holds for $1< p \le q <\infty$ if and only if 
$$
\mathcal D_1=\sup_{r>0} \bigg(\displaystyle \sigma\int_{r}^\infty {\rho}^\alpha {\rho}^{Q-1}d\rho \bigg)^\frac{1}{q} \bigg(\displaystyle \sigma \int_{0}^r{\rho}^{\beta(1-p')} {\rho}^{Q-1}d\rho\bigg)^\frac{1}{p'}<\infty,
$$
where $\sigma$ is the area of the unit sphere in $\mathbb G$ with respect to the quasi-norm $|\cdot|.$ For this supremum to be well-defined we need to have  $\alpha+Q<0$
 and $\beta(1-p')+Q>0.$
Consequently, we have
\begin{multline*}
\mathcal D_1=\sigma^{(\frac1q+\frac{1}{p'})}\sup_{r>0}
\bigg(\displaystyle\int_{r}^\infty{\rho}^{\alpha+Q-1}d\rho\bigg)^\frac{1}{q}\bigg(\displaystyle\int_0^{r} {\rho}^{\beta(1-p')+Q-1}d\rho \bigg)^\frac{1}{p'} \\
= \sigma^{(\frac1q+\frac{1}{p'})}
\sup_{r>0} \frac {{r}^ \frac{\alpha+Q}{q}}{{|\alpha+Q|}^\frac{1}{q}}
\frac{ {r}^\frac{\beta(1-p')+Q} {p'}}{{(\beta(1-p')+Q)}^\frac{1}{p'}},
\end{multline*} 
which is finite if and only if the power of $r$ is zero.
Summarising, we obtain

\begin{cor}\label{COR:hom}
Let $\mathbb G$ be a homogeneous group of homogeneous dimension $Q$, equipped with a quasi-norm $|\cdot|$. Let $1<p\le q <\infty$ and let $\alpha,\beta\in\mathbb R$. Then the inequality
\begin{equation}\label{EQ:Hardy1hg}
\bigg(\int_\mathbb G\bigg(\int_{B(0,\vert x \vert)}\vert f(y) \vert dy\bigg)^q {\vert x \vert}^\alpha dx\bigg)^\frac{1}{q}\le C\bigg\{\int_{\mathbb G} {\vert f(x) \vert}^p {\vert x \vert }^\beta dx\bigg\}^{\frac1p}
\end{equation}
holds for all measurable functions $f:\mathbb G\to{\mathbb C}$  if and only if 
$\alpha+Q<0$,  $\beta(1-p')+Q>0$ and 
$\frac{\alpha+Q}{q}+\frac{\beta(1-p')+Q} {p'}=0.$
Moreover, the constant $C$ for \eqref{EQ:Hardy1hg} satisfies
\begin{equation}\label{EQ:constants-hg}
\frac{\sigma^{\frac1q+\frac{1}{p'}}}{|\alpha+Q|^\frac{1}{q}(\beta(1-p')+Q)^\frac{1}{p'}}
\leq C\leq (p')^{\frac{1}{p'}} p^\frac{1}{q}\frac{\sigma^{\frac1q+\frac{1}{p'}}}{|\alpha+Q|^\frac{1}{q}(\beta(1-p')+Q)^\frac{1}{p'}},
\end{equation}  
where $\sigma$ is the area of the unit sphere in $\mathbb G$ with respect to the quasi-norm $|\cdot|.$
\end{cor}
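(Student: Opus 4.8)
The plan is to derive Corollary~\ref{COR:hom} as a direct specialisation of Theorem~\ref{THM:Hardy1}, since the corollary is nothing more than the Muckenhoupt condition $\mathcal D_1<\infty$ made explicit for the power weights $u(x)=|x|^\alpha$ and $v(x)=|x|^\beta$ on a homogeneous group. First I would invoke the polar decomposition~\eqref{EQ:polar}, which on $\mathbb G$ takes the form $\lambda(r,\omega)=r^{Q-1}$ (case (II) of the introduction), together with the fact that the quasi-sphere $\{|x|=\rho\}$ has constant surface measure $\sigma$ independent of $\rho$. Applying this to the defining integrals for $U$ and $V$ in Theorem~\ref{THM:Hardy1} with $|x|_a=|x|=r$ immediately gives
\begin{align*}
U(x)&=\sigma\int_r^\infty \rho^{\alpha+Q-1}\,d\rho,\qquad
V(x)=\sigma\int_0^r \rho^{\beta(1-p')+Q-1}\,d\rho,
\end{align*}
so that $\mathcal D_1=\sup_{x\neq a}U^{1/q}(x)V^{1/p'}(x)$ reduces to a supremum over $r>0$ of a product of two elementary radial integrals, exactly as displayed in the text preceding the corollary.

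Next I would address the convergence of these two integrals, which is where the sign conditions enter. The integral $\int_r^\infty \rho^{\alpha+Q-1}\,d\rho$ converges at infinity if and only if $\alpha+Q<0$, and under this hypothesis it equals $r^{\alpha+Q}/|\alpha+Q|$. Symmetrically, $\int_0^r \rho^{\beta(1-p')+Q-1}\,d\rho$ converges at the origin if and only if $\beta(1-p')+Q>0$, in which case it equals $r^{\beta(1-p')+Q}/(\beta(1-p')+Q)$. These are precisely the two hypotheses $u\in L^1(\mathbb X\setminus\{a\})$ and $v^{1-p'}\in L^1_{loc}(\mathbb X)$ of Theorem~\ref{THM:Hardy1} specialised to power weights, so that $U$ and $V$ are finite and positive as required; I would note this correspondence explicitly to confirm that the theorem genuinely applies.

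Substituting the two evaluated integrals into $\mathcal D_1$ then yields
\begin{align*}
\mathcal D_1=\sigma^{\frac1q+\frac{1}{p'}}\,\frac{1}{|\alpha+Q|^{\frac1q}(\beta(1-p')+Q)^{\frac{1}{p'}}}\,\sup_{r>0} r^{\frac{\alpha+Q}{q}+\frac{\beta(1-p')+Q}{p'}}.
\end{align*}
The supremum over $r\in(0,\infty)$ of a pure power $r^\gamma$ is finite (and equal to $1$) if and only if the exponent $\gamma$ vanishes; if $\gamma\neq 0$ the power blows up either as $r\to\infty$ or as $r\to 0$. Hence $\mathcal D_1<\infty$ is equivalent to the scaling balance condition $\frac{\alpha+Q}{q}+\frac{\beta(1-p')+Q}{p'}=0$, and in that case $\mathcal D_1$ equals the constant prefactor. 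Finally I would feed this exact value of $\mathcal D_1$ into the general two-sided bound~\eqref{EQ:constants}, namely $\mathcal D_1\le C\le \mathcal D_1(p')^{1/p'}p^{1/q}$, which reproduces the stated sharp constants~\eqref{EQ:constants-hg} verbatim. The argument is essentially bookkeeping once the theorem is in hand; the only point requiring genuine care is the clean separation of the two convergence conditions from the zero-exponent balance condition, ensuring that all three hypotheses on $\alpha$ and $\beta$ are individually necessary and jointly sufficient for $\mathcal D_1<\infty$.
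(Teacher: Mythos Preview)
Your proposal is correct and follows essentially the same route as the paper: apply Theorem~\ref{THM:Hardy1} with $u(x)=|x|^\alpha$, $v(x)=|x|^\beta$, reduce $\mathcal D_1$ via the polar decomposition $\lambda(r,\omega)=r^{Q-1}$ to a product of radial power integrals, read off the two convergence conditions and the zero-exponent balance, and then insert the resulting value of $\mathcal D_1$ into~\eqref{EQ:constants}. Your write-up is in fact slightly more careful than the paper's in identifying the convergence conditions with the standing hypotheses $u\in L^1(\mathbb X\setminus\{a\})$ and $v^{1-p'}\in L^1_{loc}(\mathbb X)$, and in flagging that all three conditions must be shown individually necessary.
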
 

 \subsection{Hyperbolic spaces} 
 
 Let $\mathbb H^n$ be the hyperbolic space of dimension $n$ and let $a\in \mathbb H^n$.
 Let us take the weights 
 $$u(x)= (\sinh {\vert x \vert_a})^\alpha \textrm{ and } v(x)= (\sinh {\vert x \vert_a})^\beta.$$  
Then, passing to polar coordinates, $\mathcal D_1$ is equivalent to
 
 $$\mathcal D_1\simeq \sup_{\vert x \vert_a>0}\bigg(\displaystyle\int_{\vert x \vert_a}^\infty(\sinh{\rho})^{\alpha+n-1}d\rho\bigg)^\frac{1}{q}\bigg(\displaystyle\int_0^{\vert x \vert_a} (\sinh{\rho})^{\beta(1-p')+n-1}d\rho \bigg)^\frac{1}{p'}.$$
 For the integrability of the first and the second terms we need, respectively,
$\alpha+n-1<0$ and  $ \beta(1-p')+n>0.$

Let us now analyse conditions for this supremum to be finite. 
For $\vert x \vert_a \gg1 $, it can be written as

\begin{multline*}
\sup_{\vert x \vert_a \gg1} \bigg(\displaystyle\int_{\vert x \vert_a}^\infty (\exp{\rho})^{\alpha+n-1}d\rho\bigg)^\frac{1}{q} \bigg(\displaystyle\int_0^{\vert x \vert_a} (\exp{\rho})^{\beta(1-p')+n-1}d\rho \bigg)^\frac{1}{p'}\\
\simeq \sup_{\vert x \vert_a \gg1} (\exp {\vert x \vert_a})^{\bigg(\frac{\alpha+n-1}{q}+\frac{\beta(1-p')+n-1}{p'} \bigg)},
\end{multline*} 
which is finite if and only if $\frac{\alpha+n-1}{q}+\frac{\beta(1-p')+n-1}{p'}\le0.$
For $\vert x \vert_a\ll 1 $, it can be written as

$\sup_{\vert x \vert_a\ll 1} \bigg(\displaystyle\int_{\vert x \vert_a}^\infty(\sinh{\rho})^{\alpha+n-1}d\rho\bigg)^\frac{1}{q} \bigg(\displaystyle\int_0^{\vert x \vert_a} {\rho}^{\beta(1-p')+n-1}d\rho \bigg)^\frac{1}{p'}$

$\simeq \sup_{\vert x \vert_a\ll 1} \bigg(\displaystyle\int_{\vert x \vert_a}^ R (\sinh{\rho})^{\alpha+n-1}d\rho +\displaystyle\int_{R}^\infty(\sinh{\rho})^{\alpha+n-1}d\rho\bigg)^\frac{1}{q} \bigg(\displaystyle\int_0^{\vert x \vert_a} {\rho}^{\beta(1-p')+n-1}d\rho \bigg)^\frac{1}{p'}.$

For some small $R$ we have ${\sinh{\rho} }_{\vert x \vert_a\le \rho<R} \approx \rho$, so that the above supremum is 

$\approx \sup_{\vert x \vert_a\ll 1}\bigg( \vert x \vert_a^{\alpha+n}+C_R\bigg)^\frac{1}{q} $ ${\vert x \vert_a }^\frac{\beta(1-p')+n}{p'}.$

Now, for $\alpha+n\ge0$, this is 

$\approx \sup_{\vert x \vert_a\ll 1}{\vert x \vert_a}^\frac{\beta(1-p')+n}{p'}$, which is finite if and only if $\frac{\beta(1-p')+n}{p'} \ge0$.
At the same time, for for $\alpha+n <0 $ it is

$\approx \sup_{\vert x \vert_a\ll 1}{\vert x \vert_a}^{\frac{\alpha+n}{q}+\frac{\beta(1-p')+n}{p'}}$ , which is finite if and only if $\frac{\alpha+n}{q}+\frac{\beta(1-p')+n}{p'}\ge0$.

Summarising, we obtain the following

\begin{cor}\label{COR:hyp}
Let $\mathbb H^n$ be the hyperbolic space, $a\in \mathbb H^n$, and let $|x|_a$ denote the hyperbolic distance from $x$ to $a$. Let $1<p\le q <\infty$ and let $\alpha,\beta\in\mathbb R$. Then the inequality
\begin{equation*}\label{EQ:Hardy1hyp}
\bigg(\int_{\mathbb H^n}\bigg(\int_{B(a,\vert x \vert_a)}\vert f(y) \vert dy\bigg)^q (\sinh {\vert x \vert_a})^\alpha dx\bigg)^\frac{1}{q}\le C\bigg\{\int_{\mathbb H^n} {\vert f(x) \vert}^p (\sinh {\vert x \vert_a})^\beta dx\bigg\}^{\frac1p}
\end{equation*}
holds for all measurable functions $f:\mathbb H^n\to{\mathbb C}$  if and only if 
the parameters satisfy either of the following conditions
\begin{itemize}
\item[(A)] for $\alpha+n \ge 0 $, if and only if $\alpha+n<1$, $\beta(1-p')+n>0$ and $\frac{\alpha+n}{q}+\frac{\beta(1-p')+n}{p'}\le \frac{1}{q}+\frac{1}{p'}$;
\item[(B)] for $\alpha+n <0 $,  if and only if  $\beta(1-p')+n>0$  and $0\leq \frac{\alpha+n}{q}+\frac{\beta(1-p')+n}{p'}\le \frac{1}{q}+\frac{1}{p'}$.
\end{itemize} 
\end{cor}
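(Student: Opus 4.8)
The plan is to deduce the corollary directly from Theorem~\ref{THM:Hardy1}: since both weights are radial, the inequality holds if and only if the Muckenhoupt-type quantity $\mathcal{D}_1$ is finite, so the entire problem reduces to analysing one supremum. First I would insert the hyperbolic polar decomposition \eqref{EQ:polar} with $\lambda(r,\omega)=(\sinh r)^{n-1}$ from Example~(III) into the definitions of $U$ and $V$. Because $u(x)=(\sinh|x|_a)^\alpha$ and $v(x)=(\sinh|x|_a)^\beta$ depend only on $r=|x|_a$, the angular integrations factor out and $\mathcal{D}_1$ collapses to the one-variable expression
$$
\mathcal{D}_1\approx\sup_{r>0}\bigg(\int_r^\infty(\sinh\rho)^{\alpha+n-1}\,d\rho\bigg)^{\frac1q}\bigg(\int_0^r(\sinh\rho)^{\beta(1-p')+n-1}\,d\rho\bigg)^{\frac1{p'}},
$$
exactly as recorded before the statement.

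I would then isolate the two separate requirements contained in $\mathcal{D}_1<\infty$: \emph{integrability} of each radial integral, and \emph{boundedness} of their product in $r$. Integrability of the outer integral at infinity forces $\alpha+n-1<0$, since $\sinh\rho\sim\tfrac12 e^\rho$; integrability of the inner integral at the origin forces $\beta(1-p')+n>0$, since $\sinh\rho\sim\rho$. These yield the two standing constraints $\alpha+n<1$ and $\beta(1-p')+n>0$.

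The main obstacle, and the reason the statement splits into cases (A) and (B), is the boundedness of the product, which behaves differently in the two regimes where $\sinh$ changes character. For $r\gg1$ both integrands are governed by $e^\rho$, so the product is comparable to $(\exp r)^{\frac{\alpha+n-1}{q}+\frac{\beta(1-p')+n-1}{p'}}$ and stays bounded precisely when $\frac{\alpha+n}{q}+\frac{\beta(1-p')+n}{p'}\le\frac1q+\frac1{p'}$. For $r\ll1$ I would use $\sinh\rho\approx\rho$ on $[r,R]$ and split $\int_r^\infty=\int_r^R+\int_R^\infty$; the tail contributes a fixed constant while the small piece contributes $\int_r^R\rho^{\alpha+n-1}\,d\rho$, whose behaviour as $r\to0$ is dictated by the sign of $\alpha+n$. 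If $\alpha+n\ge0$ this piece stays bounded, the first factor is comparable to a constant, and the product reduces to $r^{(\beta(1-p')+n)/p'}$, automatically bounded by $\beta(1-p')+n>0$; if $\alpha+n<0$ the piece blows up like $r^{\alpha+n}$, the product behaves like $r^{\frac{\alpha+n}{q}+\frac{\beta(1-p')+n}{p'}}$, and boundedness as $r\to0$ requires the extra lower bound $\frac{\alpha+n}{q}+\frac{\beta(1-p')+n}{p'}\ge0$.

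Finally I would collate the three regimes. When $\alpha+n\ge0$ the integrability bound $\alpha+n<1$ is a genuine constraint while the small-$r$ analysis imposes nothing new, giving case~(A); when $\alpha+n<0$ the bound $\alpha+n<1$ is automatic but the small-$r$ analysis contributes the active lower bound, giving case~(B). I expect the only delicate point to be bookkeeping the large-$r$ step when one integrand saturates to a constant rather than growing exponentially, but in those borderline situations the relevant inequality holds automatically because $\alpha+n-1<0$, so the stated conditions remain correct.
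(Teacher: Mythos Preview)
Your proposal is correct and follows essentially the same route as the paper: reduce to the finiteness of $\mathcal D_1$ via Theorem~\ref{THM:Hardy1}, pass to the one-variable supremum using the hyperbolic polar decomposition, read off the integrability constraints $\alpha+n<1$ and $\beta(1-p')+n>0$, and then split the analysis of the product into the regimes $r\gg1$ (exponential asymptotics) and $r\ll1$ (splitting $\int_r^\infty=\int_r^R+\int_R^\infty$ and using $\sinh\rho\approx\rho$), with the sign of $\alpha+n$ dictating whether the small-$r$ behaviour contributes an extra lower bound. The only microscopic caveat is the borderline $\alpha+n=0$, where the small piece is logarithmic rather than bounded, but since $\beta(1-p')+n>0$ the power factor kills the log and your conclusion is unaffected; the paper is equally informal at this point.
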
 

\subsection{Cartan-Hadamard manifolds}

Let $(M,g)$ be a Cartan-Hadamard manifold and assume that the sectional curvature $K_M$ is constant. In this case it is known that $J(t,\omega)$ is a function of $t$ only. More precisely, if  $K_M=-b$ for $b\ge0$, then 
$J(t,\omega)= 1$ if $b=0$, and $J(t,\omega)=(\frac{\sinh \sqrt{b}t}{\sqrt{b}t})^{n-1}$ for $b>0$,
see e.g. \cite{BC01-book} or \cite[p. 166-167]{DV1}. 
 
 When $b=0$, then let us take $u(x)=  \vert x \vert_a^\alpha$ and $v(x)=  \vert x \vert_a^\beta$ , then the  inequality \eqref{EQ:Hardy1} holds for $1<p\le q<\infty$ if and only if\\
 
 $\sup_{\vert x \vert_a>0} \bigg(\displaystyle\int_{M \backslash B(a,\vert x \vert_a)}\vert y \vert_a^\alpha dy \bigg)^\frac{1}{q} \bigg(\displaystyle \int_{B(a,\vert x \vert_a)}\vert y \vert_a^{\beta(1-p')} dy\bigg)^\frac{1}{p'}<\infty$.\\

After changing to the polar coordinates, this is equivalent to            

$\sup_{\vert x \vert_a>0}\bigg(\int_{\vert x \vert_a }^\infty  {\rho}^{\alpha+n-1} d\rho \bigg)^\frac{1}{q}\bigg(\int_0^{\vert x \vert_a} {\rho}^{\beta(1-p')+n-1}d\rho \bigg)^\frac{1}{p'}$, \\
which is finite if and only if conditions of Corollary \ref{COR:hom} hold with $Q=n$ (which is natural since the curvature is zero).

When $b>0$, let us take $u(x)=(\sinh \sqrt{b}{\vert x \vert_a})^\alpha$  and $v(x)=(\sinh \sqrt{b}{\vert x \vert_a})^\beta$. Then he  inequality \eqref{EQ:Hardy1} holds for $1<p\le q<\infty$ if and only if\\

$\sup_{\vert x \vert_a>0} \bigg(\displaystyle\int_{\mathbb M \backslash B(a,\vert x \vert_a)} (\sinh \sqrt{b}{\vert y \vert_a})^{\alpha} dy \bigg)^\frac{1}{q} \bigg(\displaystyle \int_{B(a,\vert x \vert_a)}(\sinh \sqrt{b}{\vert y \vert_a})^{\beta(1-p')}dy \bigg)^\frac{1}{p'}<\infty.$\\

After changing to the polar coordinates, this supremum is equivalent to 

$\sup_{\vert x \vert_a>0}(\displaystyle\int_{\vert x \vert_a}^\infty (\sinh \sqrt{b}{t})^\alpha (\frac{\sinh \sqrt{b} t}{\sqrt{b} t})^{n-1} t^{n-1}dt )^\frac{1}{q}$\\
$\quad\times(\displaystyle \int_{0}^ {\vert x \vert_a} (\sinh \sqrt{b}{t})^{\beta(1-p')}(\frac{\sinh \sqrt{b} t}{\sqrt{b} t})^{n-1} t^{n-1} dt )^\frac{1}{p'}$\\

$\simeq\sup_{\vert x \vert_a>0}\bigg(\displaystyle\int_{\vert x \vert_a}^\infty (\sinh \sqrt{b}{t})^{\alpha+n-1}dt \bigg)^\frac{1}{q}\bigg(\displaystyle\int_0^{\vert x \vert_a} (\sinh \sqrt{b} t)^{\beta(1-p')+n-1}dt \bigg)^\frac{1}{p'}, $\\
which has the same conditions for finiteness as the case of the hyperbolic space in Corollary \ref{COR:hyp} (which is also natural since it is the negative constant curvature case).

     \section{Equivalence of weight conditions}
     
    In this section we prove that the quantities $\mathcal D_{1}$--$\mathcal D_{5}$
involving the weights in Theorem \ref{THM:Hardy1} are equivalent. However, it will be convenient to formulate it in the following slightly more general form:
     
  \begin{thm}\label{THM:equivalence}
    Let $\alpha , \beta, s >0$  and let
    $f  \in L^{1}(\mathbb X\backslash \{a\})$, $g \in L^{1} _{loc} (\mathbb X)$, be such that $ f,g >0 $ are positive a.e in $\mathbb X$. Let us denote
    \begin{align}
             F(x):= \int_{\mathbb X\backslash B(a,\vert x \vert_a)}f(y)dy\nonumber,
    \end{align}
    and
    \begin{align}
            G(x):=\int_{B(a,\vert x \vert_a)} g(y)dy\nonumber.
    \end{align}
    Then the following quantities are equivalent:
    \begin{enumerate}
    \item$\mathcal A_1:= \sup_{x\not=a}A_1(x;\alpha,\beta):= \sup_{x\not=a}F^\alpha(x)G^\beta(x).$
    \item$\mathcal A_2:=\sup_{x\not=a} A_2(x;\alpha,\beta,s):=\sup_{x\not=a}{\bigg(\int_{\mathbb X\backslash{B(a,\vert x \vert_a)}}f(y)G^{(\beta-s)/\alpha}(y)dy\bigg)^\alpha}   G^s(x),$ provided that $G^{(\beta-s)/\alpha}(y)$ makes sense.
    \item$\mathcal A_3:=\sup_{x\not=a} A_3(x;\alpha,\beta,s):= \sup_{x\not=a}\bigg(\int_{B(a,\vert x \vert_a)}g(y)F^{(\alpha-s)/\beta}(y)dy\bigg)^\beta F^s(x),$ provided that $F^{(\alpha-s)/\beta}(y)$ makes sense.     \item$\mathcal A_4:=\sup_{x\not=a} A_4(x;\alpha,\beta,s):= \sup_{x\not=a}\bigg(\int_{B(a,\vert x \vert_a)}f(y)G^{(\beta+s)/\alpha}(y)dy\bigg)^\alpha G^{-s}(x),$ provided that $f,g\in L^1(\X)$ and that $G^{-s}(x)$ makes sense.
    \item$\mathcal A_5:=\sup_{x\not=a} A_5(x;\alpha,\beta,s):=\sup_{x\not=a}\bigg(\int_{\mathbb X\backslash{B(a,\vert x \vert_a)}}g(y) F^{(\alpha+s)/\beta}(y) dy\bigg)^\beta F^{-s}(x),$
    provided $f,g\in L^1(\X)$ and that that $F^{-s}(x)$ makes sense.
    \end{enumerate}
Moreover, we have the following relations between the above quantities:
\begin{itemize}
\item $\mathcal {A}_1\le (\max(1,\frac{s}{\beta}))^\alpha \mathcal {A}_2 $ and   $\mathcal {A}_2\le  (\max(1,\frac{\beta}{s}))^\alpha \mathcal {A}_1$;
\item  $\mathcal {A}_1\le(\max(1,\frac{s}{\alpha}))^\beta \mathcal {A}_3 $ and  $\mathcal {A}_3\le  (\max(1,\frac{\alpha}{s}))^\beta \mathcal {A}_1$;
\item  $ (\frac{s}{\beta+s})^\alpha \mathcal A_4 \le \mathcal A_1\le (1+\frac{s}{\beta})^\alpha \mathcal A_4 $ and  $ (\frac{s}{\alpha+s})^\beta \mathcal A_5 \le \mathcal A_1\le (1+\frac{s}{\alpha})^\beta \mathcal A_5 $.
\end{itemize}
\end {thm}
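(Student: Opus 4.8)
The plan is to reduce all five quantities to a one-dimensional problem and then establish each two-sided bound by Riemann--Stieltjes integration by parts combined with elementary power-integral estimates. Since $F$ and $G$ depend on $x$ only through $r:=|x|_a$, the polar decomposition \eqref{EQ:polar} rewrites every inner integral as a one-dimensional Stieltjes integral: with $dF(\rho)=-\big(\int_{\Sigma_\rho}f\lambda\,d\omega\big)\,d\rho\le 0$ and $dG(\rho)=\big(\int_{\Sigma_\rho}g\lambda\,d\omega\big)\,d\rho\ge 0$, the integrand of $\mathcal A_2$ becomes $\big(\int_r^\infty G^{(\beta-s)/\alpha}(\rho)\,(-dF(\rho))\big)^\alpha G^s(r)$, and analogously for $\mathcal A_3,\mathcal A_4,\mathcal A_5$. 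Here $F$ is nonincreasing with $F(\infty)=0$ (as $f\in L^1(\X\setminus\{a\})$) and $G$ is nondecreasing with $G(0^+)=0$; since $f,g>0$ a.e.\ both are strictly monotone and continuous. It is then convenient to change variable $t=G(r)$, which turns $F$ into a nonincreasing function $\Phi(t)$ on $(0,T)$, $T:=G(\infty)$, with $\Phi(T^-)=0$, and turns $dG$ into Lebesgue measure; for instance $A_2$ takes the transparent form $\big(\int_t^T \tau^{(\beta-s)/\alpha}(-d\Phi(\tau))\big)^\alpha t^s$.

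After this reduction the whole statement follows from two model comparisons, the other two coming from the symmetry $(f,G,\alpha,\text{ball})\leftrightarrow(g,F,\beta,\text{complement})$, which fixes $\mathcal A_1$, interchanges $\mathcal A_2$ with $\mathcal A_3$ and $\mathcal A_4$ with $\mathcal A_5$, and swaps $\alpha\leftrightarrow\beta$ in every constant. For $\mathcal A_1\approx\mathcal A_2$ I would proceed as follows. To get $\mathcal A_2\le(\max(1,\beta/s))^\alpha\mathcal A_1$, integrate $\int_t^T\tau^{(\beta-s)/\alpha}(-d\Phi)$ by parts to $t^{(\beta-s)/\alpha}\Phi(t)+\frac{\beta-s}{\alpha}\int_t^T\Phi(\tau)\tau^{(\beta-s)/\alpha-1}\,d\tau$ (the endpoint $\tau=T$ drops since $\Phi(T^-)=0$), insert the pointwise bound $\Phi(\tau)\le\mathcal A_1^{1/\alpha}\tau^{-\beta/\alpha}$ coming from $A_1\le\mathcal A_1$, and evaluate $\int_t^T\tau^{-s/\alpha-1}\,d\tau\le\frac\alpha s\,t^{-s/\alpha}$; the factor $\max(1,\beta/s)$ then emerges from the two sign cases of $\beta-s$ (for $\beta<s$ the second term is negative and is discarded). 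For the reverse bound $\mathcal A_1\le(\max(1,s/\beta))^\alpha\mathcal A_2$, I invert the relation: writing $I_2(t)$ for the inner integral one has $-d\Phi=\tau^{(s-\beta)/\alpha}(-dI_2)$, hence $\Phi(t)=\int_t^T\tau^{(s-\beta)/\alpha}(-dI_2(\tau))$; integrating by parts, using $I_2(\tau)\le\mathcal A_2^{1/\alpha}\tau^{-s/\alpha}$ from $A_2\le\mathcal A_2$ together with $\int_t^T\tau^{-\beta/\alpha-1}\,d\tau\le\frac\alpha\beta\,t^{-\beta/\alpha}$, gives $\Phi(t)\le\mathcal A_2^{1/\alpha}\max(1,s/\beta)\,t^{-\beta/\alpha}$, which is the claim after taking suprema.

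The pair $\mathcal A_1\approx\mathcal A_4$ is handled by the same two-step scheme, now with the inner integral $J_4(t)=\int_0^t\tau^{(\beta+s)/\alpha}(-d\Phi(\tau))$ taken over the ball. For $(\frac{s}{\beta+s})^\alpha\mathcal A_4\le\mathcal A_1$ I integrate by parts, drop the nonpositive boundary contribution at $\tau=0$, insert $\Phi\le\mathcal A_1^{1/\alpha}\tau^{-\beta/\alpha}$, and use $\int_0^t\tau^{s/\alpha-1}\,d\tau=\frac\alpha s\,t^{s/\alpha}$ (convergent at $0$ since $s/\alpha>0$). For $\mathcal A_1\le(1+\frac s\beta)^\alpha\mathcal A_4$ I recover $\Phi(t)=\int_t^T\tau^{-(\beta+s)/\alpha}\,dJ_4(\tau)$, integrate by parts and feed in the bound $J_4(\tau)\le\mathcal A_4^{1/\alpha}\tau^{s/\alpha}$ from $A_4\le\mathcal A_4$ together with $\int_t^T\tau^{-\beta/\alpha-1}\,d\tau\le\frac\alpha\beta\,t^{-\beta/\alpha}$. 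The comparisons $\mathcal A_1\approx\mathcal A_3$ and $\mathcal A_1\approx\mathcal A_5$ are then read off by the symmetry above.

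The main obstacle is the bookkeeping of the endpoint terms. In the $\mathcal A_2,\mathcal A_3$ comparisons the far-endpoint boundary terms vanish for free ($\Phi(T^-)=0$, respectively $G(0^+)=0$), and the power integrals converge under the standing hypotheses alone. By contrast, in $\mathcal A_4$ (and, symmetrically, $\mathcal A_5$) the integration by parts in the reverse direction produces a genuinely nonzero boundary term $T^{-(\beta+s)/\alpha}J_4(T)$ at $\tau=T=G(\infty)$, and controlling it is exactly what forces the extra hypothesis $f,g\in L^1(\X)$: it makes $T=\int_{\X}g<\infty$ and the total integral $J_4(T)=\int_{\X}f\,G^{(\beta+s)/\alpha}\,dy$ finite, so that $T^{-(\beta+s)/\alpha}J_4(T)\le\mathcal A_4^{1/\alpha}T^{-\beta/\alpha}$ is finite and is then dominated by the main term $\frac{\beta+s}{\beta}\,t^{-\beta/\alpha}$. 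Keeping track of these signs and endpoints, and verifying that the discarded terms always have the favorable sign, is the only delicate point; the rest is the elementary power-integral arithmetic that produces the stated constants.
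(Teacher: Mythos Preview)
Your proposal is correct and yields exactly the constants stated in the theorem. The overall reduction to a one--dimensional problem via the polar decomposition is the same as in the paper, and your treatment of $\mathcal A_1\approx\mathcal A_4$ (and, by symmetry, $\mathcal A_5$) is essentially the paper's argument: both integrate by parts, you in the variable $t=G(r)$, the paper directly in $r$; the endpoint analysis you outline, including the role of the hypothesis $f,g\in L^1(\mathbb X)$ in making the boundary term $T^{-(\beta+s)/\alpha}J_4(T)$ finite, matches the paper's computation line by line.

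Where you genuinely diverge is in $\mathcal A_1\approx\mathcal A_2$ (and the symmetric $\mathcal A_3$). The paper treats the two sign regimes separately: in the ``easy'' regime ($s\le\beta$ for one inequality, $s\ge\beta$ for the other) it simply uses the monotonicity $G^{(\beta-s)/\alpha}(y)\gtrless G^{(\beta-s)/\alpha}(x)$ to pull the power through the integral; in the ``hard'' regime it inserts the auxiliary function $W(x)=\int_{\mathbb X\setminus B}fG^{(\beta-s)/\alpha}$ and runs a self--referential bootstrap, factoring the integrand as $fG^{(\beta-s)/\alpha}\cdot G^{(s-\beta)/\alpha}W^{(s-\beta)/s}\cdot W^{(\beta-s)/s}$, pulling out a supremum of $G^s W^\alpha$, and closing with the power identity $\int_r^\infty\widetilde W(\int_r^\infty\widetilde W)^{(\beta-s)/s}=\frac s\beta W^{\beta/s}$. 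Your route, by contrast, is a single integration--by--parts template applied uniformly to both regimes, with the sign of $\beta-s$ only deciding whether to keep or discard a term. This is cleaner and more systematic; the paper's bootstrap has the mild advantage that the easy regime needs no integrability at the far endpoint at all, whereas your boundary terms there vanish only after invoking the a priori bound $\Phi(\tau)\le\mathcal A_1^{1/\alpha}\tau^{-\beta/\alpha}$ (resp.\ $I_2(\tau)\le\mathcal A_2^{1/\alpha}\tau^{-s/\alpha}$), which is harmless since the inequality is trivial when the right--hand supremum is infinite. Both arguments deliver the identical constants $(\max(1,\beta/s))^\alpha$ and $(\max(1,s/\beta))^\alpha$.
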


   \begin{proof}[Proof of Theorem \ref{THM:equivalence}]
   $\boxed{{\mathcal A_1\approx \mathcal A_2}}$
   
   We will first consider the case $s\le\beta$. Then for $\vert y \vert_a\geq \vert x \vert_a$ we have 
       $  G^{(\beta-s)/\alpha}(y)\ge G^{(\beta-s)/\alpha}(x)$. Consequently, we can estimate
   \begin{align*}
   A_2(x;\alpha,\beta,s) & ={\bigg(\int_{\X\backslash{B(a,\vert x \vert_a)}}f(y)G^{(\beta-s)/\alpha}(y)dy\bigg)^\alpha} G^s(x)\nonumber
 \\
 &    \ge{\bigg(\int_{\X\backslash{B(a,\vert x \vert_a)}}f(y)dy\bigg)^\alpha G^\beta(x)}\nonumber
\\ &
  =F^{\alpha}(x)G^{\beta}(x),\nonumber
   \end{align*}
  which implies  $\mathcal A_2 \ge \mathcal A_1$.
    For $s>\beta$, let us first introduce some notation, using the polar decomposition \eqref{EQ:polar}. First, we denote
     \begin{align*}
    W(x) & :={\int_{\X\backslash{B(a,\vert x \vert_a)}}f(y)G^{(\beta-s)/\alpha}(y)dy}\nonumber\\
           & =\int_{\vert x \vert_a}^{\infty}\int_{\Sigma_r}f(r,\omega) \lambda(r,\omega)\widetilde{G_1}(r)^{(\beta-s)/\alpha} d\omega_r dr\nonumber\\
  & = \int_{\vert x \vert_a}^{\infty}{\widetilde{W}}(r)dr\nonumber\\
  & =:\widetilde{W_1}(\vert x \vert_a),\nonumber
   \end{align*}
   where
$$
   \widetilde{G_1}(r) := \int_0^r\int_{\Sigma_s} g(s,\sigma)\lambda(s,\sigma)d\sigma_s ds =\int_0^r\widetilde{G}(s)ds,
 $$
 with $\widetilde{G}(s):=\int_{\Sigma_s} g(s,\sigma)\lambda(s,\sigma)d\sigma_s$,
  and
$$
  \widetilde{W}(r):=\int_{\Sigma_r}f(r,\omega) \lambda(r,\omega)\widetilde{G_1}(r)^{(\beta-s)/\alpha}d\omega_r.
$$
Moreover, we denote 
$$
   \widetilde{F_1}(r) : = \int_r^{\infty} \int_{\Sigma_s} \lambda(s,\sigma)f(s,\sigma)d\sigma_s ds=\int_r^{\infty}\widetilde{F}(s)ds.
$$

Using the function $W$ defined above, we can estimate
   \begin{align*}
   & F^{\alpha}(x)G^{\beta}(x) \\
   & = G^{\beta}(x)\bigg(\int_{\X\backslash{B(a,\vert x \vert_a)}}f(y)G^{(\beta-s)/\alpha}(y)G^{(s-\beta)/\alpha}(y)W^{(s-\beta)/s}(y)W^{(\beta-s)/s}(y)dy\bigg)^\alpha\nonumber
   \\
   &=G^{\beta}(x)\bigg(\int_{\vert x \vert_a}^{\infty}\int_{\Sigma_r} \lambda(r,\omega) f(r,\omega)\widetilde{G_1}^{(\beta-s)/\alpha}(r)\widetilde{G_1}^{(s-\beta)/\alpha}(r)\widetilde{W_1}^{(s-\beta)/s}(r)\widetilde{W_1}^{(\beta-s)/s}(r)d\omega_r dr\bigg)^\alpha\nonumber
   \\ & 
   \le\bigg(\sup_{r>\vert x \vert_a}\widetilde{G_1}^{(s-\beta)}(r)\widetilde{W_1}^{(s-\beta)\alpha/s}(r)\bigg)G^{\beta}(x)\bigg(\int_{\vert x \vert_a}^{\infty}\widetilde{W}(r) {(\int_r^{\infty}\widetilde{W}(s)ds)}^{(\beta-s)/s}dr\bigg)^\alpha\nonumber
 \\
&= \bigg(\sup_{\vert y \vert _a>\vert x \vert_a}{G}^{s}(y){W}^{\alpha}(y)\bigg)^{(s-\beta)/s}\bigg(\frac{s}{\beta}\bigg)^{\alpha}
G^{\beta}(x) W^{(\beta\alpha)/s}(x)\nonumber
\\ & 
 \le {\bigg(\frac{s}{\beta}\bigg)}^{\alpha}\bigg(\sup_{\vert y \vert_a >\vert x \vert_a}{G}^{s}(y){W}^{\alpha}(y)\bigg)^{(1-{\beta/s})}
\bigg(\sup_{{\vert x \vert_a}>0} G^{s}(x) W^{\alpha}(x)\bigg)^{\beta/s}\nonumber
\\ &   \le {\bigg(\frac{s}{\beta}\bigg)}^{\alpha}\sup_{{\vert x \vert_a} >0} A_2(x;\alpha,\beta,s).\nonumber
   \end{align*} 
  Therefore, we obtain
   $$
 \mathcal A_1 \le {\left(\frac{s}{\beta}\right)}^{\alpha} \mathcal A_2.
$$
Hence, we have for every $s>0$ the inequality
$$
\mathcal A_1 \le {\left(\max(1,\frac{s}{\beta})\right)}^{\alpha} \mathcal A_2.
$$
    Conversely, we have for $s<\beta$,
   \begin{align*}
 & G^s(x)W^\alpha(x) \\
 & =G^s(x)\bigg(\int_{\X\backslash{B(a,\vert x \vert_a)}}f(y)G^{(\beta-s)/\alpha}(y)F^{(\beta-s)/\beta}(y)F^{(s-\beta)/\beta}(y)dy\bigg)^\alpha\nonumber\\
& =G^s(x)\bigg(\int_{\vert x \vert_a}^{\infty}{\int_{\Sigma_r} \lambda(r,\omega)f(r,\omega)}\bigg(\int_0^r\int_{\Sigma_s} \lambda(s,\sigma) g(s,\sigma)ds d\sigma_s\bigg)^{(\beta-s)/\alpha}\nonumber\\
&\quad\times\widetilde{F_1}^{(\beta-s)/\beta}(r)\widetilde{F_1}^{(s-\beta)/\beta}(r)drd\omega_r \bigg)^\alpha\nonumber \\
 & = G^s(x)\bigg(\int_{\vert x \vert_a}^{\infty}{\int_{\Sigma_r} \lambda(r,\omega)f(r,\omega)}\widetilde{G_1}^{(\beta-s)/\alpha}(r)\widetilde{F_1}^{(\beta-s)/\beta}(r)\widetilde{F_1}^{(s-\beta)/\beta}(r)d\omega_r dr\bigg)^\alpha.\nonumber
\end{align*}
  Consequently, we can estimate
   \begin{align*}
 & G^s(x)W^\alpha(x)
 \\ &
 \le\bigg(\sup_{r>\vert x \vert_a}\widetilde{G_1}^{(\beta-s)/\alpha}(r)\widetilde{F_1}^{(\beta-s)/\beta}(r)\bigg)^\alpha G^{s}(x) \bigg(\int_{\vert x \vert_a}^{\infty}\widetilde{F}(r)\left(\int_r^{\infty}\widetilde{F}(s)ds\right)^{(s-\beta)/\beta} dr\bigg)^\alpha\nonumber  
 \\ &=\bigg(\sup_{r>\vert x \vert_a}\widetilde{G_1}^{\beta}(r)\widetilde{F_1}^{\alpha}(r)\bigg)^{(\beta-s)/\beta}  G^{s}(x) {\bigg(\frac{\beta}{s}\bigg)}^\alpha \widetilde{F_1}^{(\alpha s)/\beta}(r)\nonumber 
 \\ &\le\bigg(\sup_{\vert y \vert_a>\vert x \vert_a} G^{\beta}(y) F^{\alpha}(y)\bigg)^{(\beta-s)/\beta}{\bigg(\frac{\beta}{s}\bigg)}^\alpha \bigg(\sup_{\vert x \vert_a>0}G^{\beta}(x) F^{\alpha}(x)\bigg)^{s/\beta}\nonumber
\\   &\le\bigg(\frac{\beta}{s}\bigg)^\alpha \sup_{\vert x \vert_a>0} A_1 (x;\alpha,\beta),\nonumber
  \end{align*}
  which gives   
      $
      \mathcal A_2 \leq {\left(\frac{\beta}{s}\right)}^\alpha \mathcal A_1.
     $
     
   On the other hand, for  $s\geq\beta$, when $\vert y \vert_a > \vert x \vert_a$  we have
   $
   G^{(\beta-s)/\alpha}(y) \le G^{(\beta-s)/\alpha}(x).
   $
   
   Therefore, we can estimate
   \begin{align*}
   A_{2}(x;\alpha,\beta,s) &= G^{s}(x)\bigg(\int_{\X\backslash{B(a,\vert x \vert_a)}}f(y)G^{(\beta-s)/\alpha}(y)dy\bigg)^\alpha\nonumber
 \\ & 
   \le G^{s}(x)\bigg(\int_{\X\backslash{B(a,\vert x \vert_a)}}f(y)dy\bigg)^\alpha G^{(\beta-s)}(x)\nonumber
\\
&=F^\alpha(x)G^\beta(x)\nonumber
\end{align*}
   i.e. $\mathcal A_2 \le \mathcal A_1.$ 
Therefore, we have for $s>0$, the overall estimate
$$
\mathcal A_2 \le {\left(\max(1,\frac{\beta}{s})\right)}^{\alpha} \mathcal A_1.
$$
Hence we have also shown that
$
  \mathcal A_1 \approx \mathcal A_2.
$

  Next we observe that the proof of $\boxed{\mathcal A_1 \approx \mathcal A_3}$ follows along the same lines as that of $\mathcal A_1 \approx \mathcal A_2$, where we just need to interchange the roles of $F$ and $G$.
  
  \smallskip
  $\boxed{\mathcal A_1 \approx \mathcal A_4}$
 
 \smallskip
  Let us denote
  \begin{align*}
W_{0}(x) &:= \int_{B(a,\vert x \vert_a)} f(y) G^{(\beta+s)/\alpha}(y)dy\nonumber
 \\    &= \int_{0}^{\vert x \vert_a}\int_{\Sigma_r} \lambda(r,\omega) f(r,\omega){G}^{(\beta+s)/\alpha}(r,\omega)d\omega_r dr\nonumber\\
\\ 
  &=: \int_{0}^{\vert x \vert_a}\widetilde{W_0}(r)dr,\nonumber
\end{align*}
\\
\\
so that we can write
  $$
   A_4(x;\alpha,\beta,s) = G^{-s}(x)W_0^\alpha(x).
  $$   
  We rewrite
  $A_1$ as
   \begin{align*}
   A_1(x;\alpha,\beta)&=G^{\beta}(x)\bigg(\int_{\mathbb X\backslash{B(a,\vert x \vert_a)}}f(y)G^{(\beta+s)/\alpha}(y)G^{-{(\beta+s)/\alpha}}(y)dy\bigg)^\alpha\nonumber
   \\
   &= G^\beta(x)\bigg(\int_{\vert x \vert_a}^{\infty}\int_{\Sigma_r} \lambda(r,\omega) f(r,\omega){G}^{(\beta+s)/\alpha}(r,\omega)G^{-(\beta+s)/\alpha}(r,\omega) d\omega_r dr\bigg)^\alpha\nonumber
 \\
   &=G^\beta(x)\bigg(\int_{\vert x \vert_a}^{\infty}\widetilde{G_1}^{-(\beta+s)/\alpha}(r)\frac{d}{dr}\bigg(\int_0^r\widetilde{W_0}(s)ds\bigg)dr\bigg)^\alpha.\nonumber
   \end{align*}
  We can estimate this by  
   \begin{align*}
   & \le G^{\beta}(x)\bigg(\widetilde{G_1}^{-(\beta+s)/\alpha}(\infty){W_0}(\infty) + \frac{(\beta+s)}{\alpha}\int_{\vert x \vert_a}^{\infty}\widetilde{G}(r)(\widetilde{G_1}(r))^{\frac{-(\beta+s)}{\alpha}-1}W_{0}(r)dr\bigg)^\alpha\nonumber
  \\
&   \le G^{\beta}(x)\bigg(\sup_{\vert y \vert_a >\vert x \vert_a} G^{-s}(y) W_{0}^{\alpha}(y)\bigg)\times
\\ & 
\;\; \times \bigg(\widetilde{G_{1}}^{-\beta/\alpha}{(\infty)} + \frac{(\beta+s)}{\alpha}\int_{\vert x \vert_a}^{\infty}\widetilde{G_1}^{-(\beta/\alpha)-1}(r) \frac{d}{dr}\bigg(\int_0^r\widetilde{G}(s)ds\bigg)dr\bigg)^\alpha\nonumber
\\ &
   \le G^{\beta}(x)\sup_{\vert y \vert_a > 0} {A_{4}}(y;\alpha,\beta,s)\bigg(\widetilde{G_{1}}^{-\beta/\alpha}{(\infty)} + \frac{(\beta+s)}{\beta} \bigg(\widetilde{G_1}^{-(\beta/\alpha)}(\vert x \vert_a) -{\widetilde{G_1}}^{-\beta/\alpha}(\infty)\bigg)\bigg)^\alpha\nonumber
\\
   &= \sup_{\vert y \vert_a > 0} {A_{4}}(y;\alpha,\beta,s) \bigg[ \frac{(\beta+s)}{\beta} + \bigg(1-\frac{(\beta+s)}{\beta}\bigg)\bigg(\frac{G(x)}{G(\infty)}\bigg)^{\beta/\alpha}\bigg ]^\alpha\nonumber
 \\ &
   \le \bigg(1+\frac{s}{\beta}\bigg)^\alpha \sup_{\vert y \vert_a>0} A_4(y;\alpha,\beta,s),
   \end{align*}
  where the expressions like $G(\infty)$ make sense since $g\in L^1(\X)$.
   Therefore, we obtain   
   $$
   \mathcal A_1\le (1+s/\beta)^\alpha \mathcal A_4.
   $$
   To prove the opposite inequality, we assume that
  $$
   \sup_ {\vert x \vert_a>0}{ A_1}(x;\alpha, \beta)<\infty.
  $$
   Then we have   
   \begin{align}
    A_{4}(x;\alpha,\beta,s)&= G^{-s}(x)\bigg(\int_{B(a,\vert x \vert_a)} G^{(\beta+s)/\alpha}(y)f(y)dy\bigg)^\alpha\nonumber
   \end{align}
\begin{align}
&=G^{-s}(x)\bigg(\int_0^{\vert x \vert_a}{\widetilde{ G_1}}^{(\beta+s)/\alpha}(r) \frac{d}{dr}\bigg(-\int_r^{\infty}{\widetilde{F}(s)}ds\bigg)dr\bigg)^\alpha\nonumber
\end{align}
\begin{align}
&=G^{-s}(x)\bigg({\widetilde{ G_1}}^{(\beta+s)/\alpha}(r){\widetilde{F_1}}(r) \bigg\vert_{\vert x \vert_a}^0+\frac{\beta+s}{\alpha}\int_0^{\vert x \vert_a}{\widetilde{F_1}}(r){\widetilde{ G_1}}^{(\beta+s)/\alpha-1}(r) \frac{d}{dr}\bigg(\int_0^r{\widetilde{G}(s)}ds\bigg)dr\bigg)^\alpha\nonumber
\end{align}
\begin{align}
\le G^{-s}(x)\bigg (\sup_{0<r<\vert x \vert_a}{\widetilde{ G_1}}^{\beta}(r){\widetilde{F_1}}^\alpha(r)\bigg)\bigg(\frac{\beta+s}{\alpha}\int_0^{\vert x \vert_a}{\widetilde{ G_1}}^{s/\alpha-1}(r)\frac{d}{dr}(\int_0^r{\widetilde{G}(s)}ds)dr\bigg)^\alpha\nonumber
\end{align}
\begin{align}
\le {\left(\frac{\beta+s}{\alpha}\right)}^\alpha \sup _{\vert y \vert_a>0}G^\beta(y) F^\alpha(y) G^{-s}(x)\bigg(\frac{\alpha}{s} G^{s/\alpha}(x)\bigg)^\alpha\nonumber
\end{align}
\begin{align}
&= {\left(\frac{\beta+s}{s}\right)}^\alpha \sup_{\vert x \vert_a>0} A_1(x;\alpha,\beta),\nonumber
\end{align}
where we have used that $f\in L^1(\X)$. Hence we have proved that 
$A_1\approx A_4.$

The proof of $\boxed{A_1\approx A_5}$ follows
 the same lines as that of the case $A_1\approx A_4$ if we interchange the roles of $F$ and $G$.
\end{proof}

\section{Equivalent conditions for the Hardy inequality}
\label{SEC:proofH}

In this section we prove Theorem \ref{THM:Hardy1} and also give some comments concerning Theorem \ref{THM:Hardy2}.
Without loss of generality we can assume that $f\geq 0$.
Then we observe that if in Theorem \ref{THM:equivalence} we take 
$$f(x)=u(x), \; g(x)=v^{1-p'}(x), \;\alpha=\frac{1}{q}, \; \beta=\frac{1}{p'},$$ 
then it follows that we have the equivalence of the quantities 
$$
                          \mathcal D_{1} \approx \mathcal D_{2} \approx \mathcal D_{3} \approx \mathcal D_{4} \approx \mathcal D_{5}.
$$                   
   So, we first assume that any one of these equivalent conditions holds true. In particular,  $\mathcal D_1<\infty$, and using  polar coordinates \eqref{EQ:polar}, we have for every $a>0$ that
 \begin{equation}\label{EQ:D1-cond}
 \bigg\{\int_a^{\infty}\int_{\Sigma_r}
 \lambda(r,\omega)u(r,\omega)d\omega_r dr\bigg\}^\frac{1}{q}\bigg\{\int_0^{a}\int_{\Sigma_r}\lambda(r,\omega)v^{1-p'}(r,\omega)d\omega_r dr\bigg\}^\frac{1}{p'}\le \mathcal D_1.
 \end{equation}
We denote
$$
     h(t):=\bigg( \int_0^t \int_{\Sigma_s} \lambda(s,\sigma) v^{1-p'}(s,\sigma)ds d\sigma_s\bigg)^\frac{1}{pp'},
$$
     \begin{align}
     \widetilde{U}_1(t):= \int_{\Sigma_t}  \lambda(t,\omega) u(t,\omega)d\omega_t\nonumber,
     \end{align}   
     \begin{align}
       F_1(s):= \int_{\Sigma_s} \lambda(s,\sigma) [f(s,\sigma) v^{\frac{1}{p}}(s,\sigma)h(s)]^p d\sigma_s\nonumber,
     \end{align}     
    and                  
     \begin{align}
     H_1(t):=\int_0^t \int_{\Sigma_s} \lambda(s,\sigma)[v^\frac{1}{p}(s,\sigma)h(s)]^{-p'}d\sigma_s ds.\nonumber
     \end{align}
Then using polar coordinates  \eqref{EQ:polar}, H\"older's inequality, and Minkowski's inequality, the left side of \eqref{EQ:Hardy1} can be estimated as 
   \begin{align}\label{EQ:e0}
  & \int_\mathbb X u(x) \bigg(\int_{B(a,\vert x \vert_a)}f(y)dy\bigg)^qdx \nonumber
  \\ &\leq\int_0^{\infty}\int_{\Sigma_r}\lambda(r,\omega)u(r,\omega)
   \bigg( \int_0^r \int_{\Sigma_s}  \lambda(s,\sigma)[f(s, \sigma)v^{\frac{1}{p}}(s,\sigma)h(s)]^pdsd\sigma_s \bigg)^\frac{q}{p}\nonumber
   \\
   &\quad\times \bigg(\int_0^r \int_{\Sigma_s} \lambda(s,\sigma) [v^{\frac{1}{p}}(s,\sigma) h(s)]^{-p'}ds d\sigma_s \bigg)^\frac{q}{p'}d\omega_r dr\nonumber
 \\
     &=\int_0^\infty \widetilde{U}_1(r)\bigg(\int_0^r F_1(s)ds\bigg)^\frac{q}{p}H_1^\frac{q}{p'}(r)dr\nonumber
   \\
  &   \le \bigg(\int_0^\infty F_1(s) \bigg(\int_s^\infty \widetilde{U}_1(r) H_1^\frac{q}{p'}(r)dr\bigg)^\frac{p}{q}ds\bigg)^\frac{q}{p}.
    \end{align}                   
 Denoting 
$$
     V_1(s):= \int_{\Sigma_s}  \lambda(s,\sigma) v^{1-p'}(s,\sigma)d\sigma_s,
$$
     and using inequality \eqref{EQ:D1-cond} we can estimate
     \begin{align}\label{EQ:e1}
    & H_1(t) =\int_0^t \int_{\Sigma_r} \lambda(r,\sigma)[v^\frac{1}{p}(r,\sigma)h(r)]^{-p'}d\sigma_r dr\nonumber
     \\
     &=\int_0^t \int_{\Sigma_r} \lambda(r,\sigma) v^{1-p'}(r,\sigma)\bigg( \int_0^r \int_{\Sigma_{\rho}} \lambda(\rho,\omega) v^{1-p'}(\rho,\omega)d\rho d\omega_{\rho}\bigg)^{-\frac{1}{p}}dr d\sigma_r\nonumber
     \\
     &=\int_0^t V_1(r)\bigg(\int_0^r  V_1(\rho) d\rho \bigg)^{-\frac{1}{p}} dr\nonumber
\\
&=p'\bigg(\int_0^t V_1(r)dr \bigg)^\frac{1}{p'}\nonumber
\\
&=p'\bigg(\int_0^t \int_{\Sigma_r} \lambda(r,\sigma) v^{1-p'}(r,\sigma)dr d\sigma_r \bigg)^\frac{1}{p'}\bigg(\int_t^\infty \int_{\Sigma_r} \lambda(r,\sigma) u(r,\sigma)dr d\sigma_r \bigg)^\frac{1}{q}\nonumber
\\ 
&\quad\times \bigg(\int_t^\infty \int_{\Sigma_r} \lambda(r,\sigma) u(r,\sigma)dr d\sigma_r \bigg)^{-\frac{1}{q}}  \nonumber
\\
& \le p' \mathcal D_1 \bigg(\int_t^\infty\widetilde{U}_1(s)ds \bigg)^{-\frac{1}{q}}.
     \end{align}
     At the same time we can also estimate
     \begin{align}\label{EQ:e2}
     \int_s^\infty \widetilde{U}_1(t) \bigg(\int_t^\infty \widetilde{U}_1(\tau)d\tau\bigg)^{-\frac{1}{p'}} dt
    &=-p \int_s^\infty {\frac{d}{dt}} \bigg(\int_t^\infty \widetilde{U}_1(\tau)d\tau\bigg)^ \frac{1}{p} dt\nonumber 
     \\
     &=p \left(\int_s^\infty \widetilde{U}_1(t)dt \right)^\frac{1}{p}\nonumber
     \\
     &=p \left(\int_s^ \infty \int_{\Sigma_t} \lambda(t,\omega) u(t,\omega)dt d\omega_t\right)^\frac{1}{p}\nonumber
     \\
     &= p \left\{\left(\int_s^ \infty \int_{\Sigma_t}\lambda(t,\omega) u(t,\omega)dtd\omega_t\right)^\frac{1}{q} \right.\nonumber
     \\
     &\quad\times \left.\left(\int_0^s \int_{\Sigma_t} \lambda(t,\omega)v^{1-p'}(t,\omega)dtd\omega_t\right)^\frac{1}{p'} \right\} ^\frac{q}{p}\nonumber
     \\
     &\quad\times \left(\int_0^s\int_{\Sigma_t} \lambda(t,\omega)v^{1-p'}(t,\omega)dtd\omega_t\right)^{-\frac{q}{p'p}}\nonumber
     \\
     &\le p \mathcal D_1 ^\frac{q}{p} h^{-q}(s), 
     \end{align}
     in view of \eqref{EQ:D1-cond}.
     Therefore using \eqref{EQ:e1} and \eqref{EQ:e2} in \eqref{EQ:e0}, we have
     \begin{align}
     \int_\mathbb X u(x) \bigg(\int_{B(a,\vert x \vert_a)} f(y) dy \bigg)^qdx\leq \mathcal D_1^q {p'}^\frac{q}{p'}p \bigg(\int_\mathbb X v(x) {f(x) }^p dx \bigg)^\frac{q}{p}.\nonumber
     \end{align}
     Hence, it follows that \eqref{EQ:Hardy1} holds with
     $C\leq \mathcal D_1(p')^{\frac{1}{p'}} p^\frac{1}{q}$ proving one of the relations in \eqref{EQ:constants}.
  
     Conversely, let us assume that inequality \eqref{EQ:Hardy1} holds, and consider the function
     $$f(x)=v^{1-p'}(x) \chi_{(0,t)} (\vert x \vert_a)$$
     for some $ t>0$, and where $\chi$ is the cut-off function.
     With this function, the right hand side of of \eqref{EQ:Hardy1} takes the form    
        \begin{align}
       &\bigg(\int_{\mathbb X} v(x){ \vert f(x) \vert }^pdx\bigg)^\frac{1}{p}
       =  \bigg(\int_{\vert x \vert_a \le t} v^{1-p'}(x)dx\bigg)^\frac{1}{p}.\nonumber
\end{align}
At the same time, the left hand side of of \eqref{EQ:Hardy1} takes the form
\begin{align}
\bigg(\int_\mathbb X\bigg(\int_{B(a,\vert x \vert_a)}\vert f(y) \vert dy\bigg)^q u(x)dx\bigg)^\frac{1}{q}
&\geq \bigg(\int_{\vert x \vert_a \geq t} \bigg(\int_{B(a,\vert x \vert_a)}\vert f(y) \vert dy\bigg)^q u(x)dx\bigg)^\frac{1}{q}\nonumber
\\
&=\bigg(\int_{\vert x \vert_a \geq t} \bigg(\int_{\vert y \vert_a \leq t} v^{1-p'}(y) dy\bigg)^q u(x)dx\bigg)^\frac{1}{q}\nonumber
\\
&=\bigg(\int_{\vert x \vert_a \geq t}  u(x)dx\bigg)^\frac{1}{q} \bigg(\int_{\vert y \vert_a \leq t} v^{1-p'}(y) dy\bigg).\nonumber
\end{align}
Altogether the inequality  \eqref{EQ:Hardy1} takes the form
\begin{align}
     \bigg(\int_{\vert x \vert_a \ge t} u(x)dx\bigg)^\frac{1}{q}\bigg(\int_{\vert y \vert_a \le t} v^{1-p'}(y)dy \bigg)\le C \bigg(\int_{\vert x \vert_a \le t} v^{1-p'}(x)dx\bigg)^\frac{1}{p},\nonumber
     \end{align}
     which gives 
     $\mathcal D_1\le C$.
     Hence, we have the equivalence and the second relation in  \eqref{EQ:constants}.

\smallskip
  As for Theorem \ref{THM:Hardy2}, 
if we take $f(x)= v^{1-p'}$ , $g(x)=u(x)$, $\alpha= \frac{1}{p'}$  and  $\beta= \frac{1}{q}$ in Theorem \eqref{THM:equivalence}, we find that 
$$
\mathcal D_{1}^{*} \approx \mathcal D_{2}^{*} \approx \mathcal D_{3}^{*} \approx \mathcal D_{4}^{*} \approx \mathcal D_{5}^{*}.$$

Consequently, we can show Theorem \ref{THM:Hardy2} by the argument similar to that in Section \ref{SEC:proofH} where Theorem \ref{THM:Hardy1} was proved. 
We also note that in the case of homogeneous groups, we can actually also derive it from 
Theorem \ref{THM:Hardy1} by the involutive change of variables $x\mapsto x^{-1}.$

\medskip \noindent
{\bf Data accessibility.} No new data was collected or generated during the course of research.

\medskip \noindent
{\bf Competing interests.} We have no competing interests.

\medskip \noindent
{\bf Authors' contributions.} The authors contributed equally to this paper.

\medskip \noindent
{\bf Acknowledgements.}  The authors would like to thank Nurgissa Yessirkegenov for discussions and for checking our calculations. We would also like to thank all 5 referees of this paper for useful comments.

\medskip \noindent
{\bf  Funding statement.}
The first author was supported in parts by the FWO Odysseus Project, EPSRC
grant EP/R003025/1 and by the Leverhulme Grant RPG-2017-151. 

\medskip \noindent
{\bf Ethics statement.} The work did not involve any collection of human data.


   \end{document}